 \newtheorem{thm}{Theorem}[section]
 \theoremstyle{definition}
 \theoremstyle{remark}
 \numberwithin{equation}{section}
\begin{document}
%
%
%
%
%
%
%
%
%
\title[Local moment problem]{Canonical solutions of the local moment problem}
\author[V. Adamyan]{Vadym Adamyan}

\address{%
Department of Theoretical Physics\\
Odessa National I.I. Mechnikov University\\
Dvoryanska 2\\
65044 Odessa\\
Ukraine}
\email{vadamyan@onu.edu.ua}

\thanks{This work was partly supported by the Erasmus Mundus grant EURO 1200752. Vadym Adamyan is grateful to the Universidad
Polit\'{e}cnica de Valencia for regular hospitality}
\author[I. Tkachenko]{Igor M. Tkachenko}
\address{Instituto de matem\'{a}tica Pura y Aplicada\\ Universidad
Polit\'{e}cnica de Valencia\\ Camino de Vera s/n\\ 46022 Valencia\\ Spain)}
\email{imtk@mat.upv.es}

\subjclass{Primary 30E05, 30E05; Secondary 82C70, 82D10}

\keywords{non-decreasing function, power moments,  moment problems, orthogonal polynomials, canonical solutions, Nevanlinna formula, selfadjoint extensions}

\date{}
\dedicatory{}
\begin{abstract}
The present paper is devoted to the {\it local moment problem}, which consists in finding of non-decreasing functions on the real axis having given first $2n+1, \; n\geq 0,$ power moments on the whole axis and also $2m+1$ first power moments on a certain finite axis interval. Considering the local moment problem as a combination of the Hausdorff and Hamburger truncated moment problems we obtain the conditions of its solvability and describe the class of its solutions with minimal number of growth points if the problem is solvable.
 \end{abstract}
\maketitle
\section{Introduction}

 In many-body physical problems correlation functions of observables admit so far exact calculation only for infinitesimal time intervals and corresponding spectral distribution functions can be reconstructed from experimental data only for rather narrow spectral intervals. Attempts to extract from available data some useful information on correlation functions on the whole axis have led us to the following version of moment problem.

\textit{Given two sets of numbers (Hermitian matrices) }$a_{0},...,a_{2n}$, $%
b_{0},...,b_{2m}$ \textit{\ and an interval} $[0,\Lambda ],\,0<\Lambda
<\infty .$ \textit{To find a set of non-decreasing (matrix) measures }$%
d\sigma (t)$ \textit{, which satisfy the conditions:}

\begin{itemize}
\item
\begin{equation*}
\int\limits_{-\infty }^{\infty }t^{k}d\sigma(t)=a_{k}, \, k=0,...,2n;
\end{equation*}

\item
\begin{equation*}
\int\limits_{0}^{\Lambda }t^{l}d\sigma (t)=b_{l},\,l=0,...,2m.
\end{equation*}
\end{itemize}

The formulated problem is a special combination of the well known truncated Hausdorff and Hamburger
moment problems \cite{Akh, KN}. It is  motivated by the fact that in reality
the spectral distribution (matrix) function $\sigma (t)$ is accessible to observation
only on a finite spectral interval but at the same time the
values of a finite number of the first power moments of $d\sigma (t)$ can be
found independently from exact asymptotic relations and sum rules.

In this paper we study the local moment problem only for scalar $\sigma (t)$. Its first part contains a special class of solutions of the truncated Hausdorff moment problem. Using as in \cite{AT0, ATU, AT, AT2} the approach based on the extension theory of Hermitian operators we obtain in the next section the solvability criterium of the truncated Hausdorff problem, which is treated as a version of the Stieltjes problem \cite{Akh}, where the sought $\sigma (t)$ should be constant out of $(0, \Lambda )$.

In Section 3 we make clear here which among the canonical solutions of the truncated Stieltjes problem, that is solutions of minimal number of point mass, are constant out of $(0, \Lambda )$.

In a short Section 4 we present the Nevanlinna formula for description of the all canonical solutions of the truncated Hausdorff problem.

In the last section  a solution $\sigma(t)$ of the local moment moment problem ie represented as a sum $\sigma _{\Lambda }(t)+\sigma _{\Lambda }^{\perp }(t)$, where $\sigma _{\Lambda }(t)$ and $\sigma _{\Lambda }^{\perp }(t)$  grows only on $[0,\Lambda]$ and out of  $[0,\Lambda]$, respectively. The summand  $\sigma _{\Lambda }(t)$ on $[0,\Lambda]$ is nothing else but a solution of the Hausdorff problem for the given moments $b_{0},...,b_{2m}$, while $\sigma _{\Lambda }^{\perp }(t)$ is a solution of the truncated Hamburger moment for the altered moments $a_{0}-b_{0},...,a_{2n}-b_{2m}$, which has no growth points on $[0,\Lambda]$.  We find here for the latter problem, which we call the Hamburger problem with gap,  the solvability conditions and describe its canonical solutions.

\section{The solvability criterium of truncated Hausdorff moment problem}

The starting point for the solution of the above local moment problem is the
\textit{truncated Hausdorff moment problem}. It is
formulated as follows:

\textit{Given a set of real numbers}
\begin{equation}
\{b_{0},b_{1},b_{2},\ldots ,b_{2m}\},\quad m=0,1,2,\ldots .  \label{c}
\end{equation}%
\textit{To find all distributions }$\sigma (t)$\textit{\ such that}
\begin{equation}
\underset{0}{\overset{\Lambda }{\int }}t^{k}d\sigma (t)=b_{k}\ ,\
k=0,1,2,\ldots ,2m.  \label{e1}
\end{equation}

The formulation of the corresponding Stieltjes problem is similar, the only
difference is that $\Lambda $ in (\ref{e1}) is replaced by $\infty $,
\begin{equation}
\underset{0}{\overset{\infty }{\int }}t^{k}d\sigma (t)=b_{k}\ ,\
k=0,1,2,\ldots ,2m.  \label{e1a}
\end{equation}
Evidently, any solution of the Hausdorff problem is a special solution of
the Stieltjes problem, for which there are no growth points of the
 $\sigma (t)$ on the half-axis $(\Lambda ,\infty )$. Therefore the
criterium of solvability of the Stieltjes problem is only a necessary
condition for the solvability of the Hausdorff problem.

\begin{thm}\label{first}
A system of real numbers (\ref{c}) admits the representation (%
\ref{e1}) with non-decreasing $\sigma (t)$ if and only if

a) the Hankel matrix $\Gamma _{m}:=(b_{k+j})_{k,j=0}^{m}$ is non-negative;

b) for any set of complex numbers $\xi _{0},\ldots ,\xi _{r},\,0\leq r\leq
m-1,$ the condition
\begin{equation}
\underset{j,k=0}{\overset{r}{\sum }}b_{j+k}\xi _{k}\overline{\xi _{j}}=0
\label{add}
\end{equation}%
implies
\begin{equation}
\underset{j,k=0}{\overset{r}{\sum }}b_{j+k+2}\xi _{k}\overline{\xi _{j}}=0;
\label{add1a}
\end{equation}%
c) the Hankel matrix $\Gamma _{m-1}^{(1)}:=(b_{k+j+1})_{k,j=0}^{m-1}$ is
non-negative and for any set $\xi _{0},...,\xi _{r}\in\mathbb{C},\,\, 0\leq r\leq m-1,$
the condition
\begin{equation}
\underset{j,k=0}{\overset{r}{\sum }}b_{j+k+1}\xi _{k}\overline{\xi _{j}}=0
\label{add1b}
\end{equation}%
implies (\ref{add1a});

d) the matrix $\Lambda \Gamma _{m-1}-\Gamma _{m-1}^{(1)}$ is non-negative definite.

\end{thm}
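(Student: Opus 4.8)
The plan is to prove both implications through the Hermitian form generated by the data together with a finite-dimensional model of the operator of ``multiplication by $t$''.

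Necessity is direct. If $\sigma(t)$ is non-decreasing, supported on $[0,\Lambda]$, and has the prescribed moments, then for every polynomial $p(t)=\sum_{k=0}^{m}\xi_k t^k$ one has $\int_0^\Lambda |p|^2\,d\sigma=\sum_{j,k=0}^m b_{j+k}\xi_k\bar\xi_j\ge 0$, which is a); testing against $t|p|^2$ and against $(\Lambda-t)|p|^2$, both non-negative on $[0,\Lambda]$, yields the positivity of $\Gamma_{m-1}^{(1)}$ in c) and condition d). The implications in b) and c) follow from a support argument: if $\int_0^\Lambda|p|^2\,d\sigma=0$ (respectively $\int_0^\Lambda t|p|^2\,d\sigma=0$) with $\deg p\le m-1$, then $p$ (respectively $tp$) vanishes $\sigma$-almost everywhere, so $\int_0^\Lambda t^2|p|^2\,d\sigma=0$, which is exactly (\ref{add1a}).

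For sufficiency I would build the model space. Let $\mathcal P_m$ denote the polynomials of degree at most $m$, equipped with the (possibly degenerate) Hermitian form $\langle p,q\rangle=\sum_{j,k} b_{j+k}\,p_k\bar q_j$, well defined since only $b_0,\dots,b_{2m}$ occur. By a) this form is non-negative; let $\mathcal N$ be its isotropic subspace and set $\mathcal H=\mathcal P_m/\mathcal N$, a finite-dimensional Hilbert space. On the subspace $\mathcal D\subset\mathcal H$ that is the image of $\mathcal P_{m-1}$ I would define $A[p]=[tp]$. The crucial point is that $A$ is well defined and symmetric: if $q\in\mathcal N$ has degree $\le m-1$ and coefficients $\xi_k$, then $\langle tq,tq\rangle=\sum b_{j+k+2}\xi_k\bar\xi_j=0$ by b), whence $tq\in\mathcal N$; symmetry is immediate because $t$ is real. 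The positivity part of c) and condition d) then give $0\le\langle A u,u\rangle\le\Lambda\langle u,u\rangle$ for $u\in\mathcal D$, i.e. $0\le A\le\Lambda$ on $\mathcal D$, while the implication in c) says precisely that $\langle Au,u\rangle=0$ forces $\langle Au,Au\rangle=0$, so the form-kernel and the operator-kernel of $A$ coincide and $A$ can admit a non-negative self-adjoint extension.

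The heart of the argument is the extension step: I would extend $A$ to a self-adjoint operator $\tilde A$ on all of $\mathcal H$ with $\operatorname{spec}(\tilde A)\subset[0,\Lambda]$. After the affine change $B=(2A-\Lambda I)/\Lambda$ this amounts to extending the symmetric map $B\colon\mathcal D\to\mathcal H$ to a self-adjoint contraction on $\mathcal H$, which is possible in finite dimensions by the contraction-extension principle; here the full strength of b), c), d) is used to ensure that $B$ is a genuine contraction on $\mathcal D$, since a form bound on a proper subspace does not by itself control $\|Bu\|$ when $Bu\notin\mathcal D$. Setting $\tilde A=\tfrac{\Lambda}{2}(\tilde B+I)$ gives $0\le\tilde A\le\Lambda$. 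Finally, with $e_0=[1]$ and the spectral resolution $\tilde A=\int_0^\Lambda t\,dE(t)$ I would put $d\sigma(t)=d\langle E(t)e_0,e_0\rangle$; since $\tilde A^k e_0=[t^k]$ for $0\le k\le m$, one obtains $\int_0^\Lambda t^{j+k}\,d\sigma=\langle[t^k],[t^j]\rangle=b_{j+k}$ for all $j,k\le m$, recovering every prescribed moment $b_0,\dots,b_{2m}$. The main obstacle I anticipate is controlling the spectrum of the extension at both endpoints $0$ and $\Lambda$ simultaneously, which is exactly where the delicate bookkeeping of the isotropic subspace $\mathcal N$ under multiplication by $t$ and by $\Lambda-t$ becomes essential.
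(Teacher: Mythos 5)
Your necessity half is fine: testing $d\sigma$ against $|p|^{2}$, $t|p|^{2}$ and $(\Lambda-t)|p|^{2}$ gives a), the positivity in c), and d), and the support argument for the implications in b) and c) is correct. The model-space construction and the closing step (the spectral measure of the extension applied to $e_{0}=[1]$ reproduces all of $b_{0},\dots,b_{2m}$, since $\tilde A^{k}e_{0}=[t^{k}]$ for $k\le m$) are also sound. The genuine gap is exactly the step you flag and postpone. To invoke Krein's theorem on self-adjoint contractive extensions you must verify the \emph{norm} bound $\|Bu\|\le\|u\|$ on $\mathcal D$, which for $B=(2A-\Lambda I)/\Lambda$ is equivalent to
\begin{equation*}
\|Au\|^{2}\le\Lambda\,\langle Au,u\rangle,\qquad u\in\mathcal D,
\end{equation*}
i.e.\ to the matrix inequality $\Lambda\Gamma_{m-1}^{(1)}-\Gamma_{m-1}^{(2)}\ge 0$, where $\Gamma_{m-1}^{(2)}:=(b_{j+k+2})_{j,k=0}^{m-1}$ (the moment form of $\int t(\Lambda-t)|p|^{2}\,d\sigma\ge 0$). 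This does \emph{not} follow from a)--d). Take $m=1$, $\Lambda=1/2$, $b_{0}=1$, $b_{1}=b_{2}=1/4$: then $\Gamma_{1}>0$, $b_{1}>0$, $\Lambda b_{0}-b_{1}=1/4\ge 0$, and the kernel implications in b), c) hold trivially, so a)--d) are satisfied; yet $\Lambda b_{1}-b_{2}=-1/8<0$, so $\|A[1]\|^{2}=b_{2}=1/4>1/8=\Lambda\langle A[1],[1]\rangle$ and $B$ is not a contraction. No self-adjoint extension with spectrum in $[0,\Lambda]$ can exist (for such an extension $\tilde A^{2}\le\Lambda\tilde A$, contradicting the last inequality), and in fact no representing measure exists at all, because any $\sigma$ supported in $[0,1/2]$ must satisfy $b_{2}=\int t^{2}d\sigma\le\Lambda\int t\,d\sigma=\Lambda b_{1}$. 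So the gap cannot be repaired from the stated hypotheses: your scheme proves the theorem only after $\Lambda\Gamma_{m-1}^{(1)}-\Gamma_{m-1}^{(2)}\ge 0$ (or an equivalent condition, such as (\ref{critHausd})) is adjoined to a)--d).

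For the record, the paper's own sufficiency argument is quite different and stumbles on the same stone. It takes a canonical solution of the truncated \emph{Stieltjes} problem, claimed to have $s\le m$ atoms $t_{1}<\dots<t_{s}$, and, if $t_{s}>\Lambda$, evaluates the form of d) on a polynomial $Q$ of degree $\le m-1$ with $Q(t_{1})=\dots=Q(t_{s-1})=0$, $Q(t_{s})=1$, obtaining the negative value $(\Lambda-t_{s})\mu_{s}$. But canonical Stieltjes solutions generically carry $m+1$ atoms, and then no such $Q$ exists. In the example above the canonical solution is $\frac{3}{4}\delta_{0}+\frac{1}{4}\delta_{1}$: the atom at $0$ compensates, inside the form of d), the atom at $1>\Lambda$, which is why d) survives while the Hausdorff problem is unsolvable. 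So the obstruction you anticipated is intrinsic to the statement rather than to your method: both routes need the extra positivity of $\Lambda\Gamma_{m-1}^{(1)}-\Gamma_{m-1}^{(2)}$, which the paper effectively restores only in Theorem \ref{second}.
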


\begin{proof}
Due to \cite{ATU, AT}) the conditions a) - c) of the theorem is a
criterion of solvability of the truncated Stieltjes moment problem.
Therefore we need only to prove that the condition d), in addition to a) -
c), is equivalent to the existence, for given moments, of those solutions of the Stieltjes
problem, for which $\sigma (t)=const$ for $t>\Lambda $.

Notice that due to the conditions a) and c) of the theorem, the moments $%
b_{j}$ are non-negative, $b_{j}\geq 0,\,j=0,...,2m$. Excluding the trivial
case, when the sought $\sigma \left( t\right) $ may have only one point of
growth at $t=0,$ from now on we will assume that all these numbers are
strictly positive, i.e. $b_{j}>0,$ $\,j=0,...,2m$.

Suppose that a) - d) hold. In this case for a given set of real
numbers $b_{0},...,b_{2m}$ by virtue of the conditions a)- c) the
corresponding truncated Stieltjes moment problem has at least one solution $\sigma_{0} (t)$
 which amounts to $s\leq m$ point masses $\mu_{1},...\mu_{s}, \, \min{\mu_{j}>0},$ located at some points $t_{1}<...<t_{s}<\infty$ of the half-axis $[0,\infty)$ \cite{ATU} (and also \cite{AT})(We will return to this issue later). Note that \textit{the distribution   $\sigma_{0} (t)$ is at the same time a solution of the Hausdorff problem if and only if} $t_{s}\leq \Lambda$.
For an arbitrary set of complex numbers $\xi _{0},\xi _{1},\ldots ,\xi _{m-1}$ and the polynomial
\begin{equation}
P(t)=\xi _{0}+\xi _{1}\ t+\xi _{2}\ t^{2}+\ldots +\xi _{r}\ t^{m-1}
\label{defa}
\end{equation}%
the equalities (\ref{e1}) for $\sigma_{0} (t)$ and the special form of this distribution result in equalities
\begin{equation}\label{e2}
\begin{array}{c}
\underset{0}{\overset{\infty }{\int }}(\Lambda -t)\left\vert
P(t)\right\vert ^{2}\ d\sigma \left( t\right)=\Lambda \cdot \underset{j,k=0}{\overset{m-1}{\sum }}b_{j+k}\xi _{k}\overline{\xi _{j}}-\underset{j,k=0}{\overset{m-1}{\sum }}b_{j+k+1}\xi _{k}\overline{\xi _{j}}= \\
=\underset{j=1}{\overset{s}{\sum }}\left(\Lambda-t_{j}\right)\left|P(t_{j}) \right|^{2}\mu_{j}.
\end{array}
\end{equation}%
By (\ref{e2}) if $t_{s}\leq \Lambda$ then, evidently the matrix $\Lambda \cdot \Gamma _{m-1}-\Gamma _{m-1}^{(1)}$ is
non-negative definite.

Alternatively, if $t_{s}>\Lambda$, then for
\[\begin{array}{c}
Q(t)=\eta_{0}+\eta_{1}t+...+\eta_{m-1}t^{m-1}, \\
Q(t_{1})=Q(t_{2})=...=Q(t_{s-1})=0, \,\, Q(t_{s})=1,
\end{array}
\]
we see that
\[
\Lambda \cdot \underset{j,k=0}{\overset{m-1}{\sum }}b_{j+k}\xi _{k}\overline{\eta _{j}}-\underset{j,k=0}{\overset{m-1}{\sum }}b_{j+k+1}\eta _{k}\overline{\eta _{j}}=\underset{j=1}{\overset{s}{\sum }}\left(\Lambda-t_{j}\right)\left|Q(t_{j}) \right|^{2}\mu_{j}=\left(\Lambda - t_{s}\right)\mu_{s}<0,
\]
what is incompatible with the condition d) of the theorem.
\end{proof}

\section{Which canonical solutions of the truncated Stieltjes problem are also  solutions of the truncated Hausdorff problem?}

Let us assume that for a system of real numbers (\ref{c}) the conditions a) - c) of Theorem \ref{first} hold and let $\sigma (t)$ be some solution of the corresponding truncated Stieltjes problem. Taking the set $\mathcal{L}$ of continuous complex valued functions $f\left( t\right) ,\;0<t<\infty ,$
for which
\begin{equation}
\int\limits_{0}^{\infty }\left\vert f(t)\right\vert ^{2}d\sigma \left(
t\right) <\infty,  \label{pre1}
\end{equation}%
we will consider $\mathcal{L}$ as pre-Hilbert space  with the bilinear functional
\begin{equation}
\left\langle f,g\right\rangle =\int\limits_{0}^{\infty }f(t)\overline{%
g\left( t\right) }\ d\sigma \left( t\right), \,\, f,g \in \mathcal{L}.  \label{pre2}
\end{equation}%
as the scalar product.
Due to the conditions (\ref{e1a}), any polynomial
\begin{equation}
f\left( t\right) =\xi _{0}+\xi _{1}t+\ldots +\xi _{r}t^{r},\;\xi _{0},\ldots
,\xi _{r}\in \mathbb{C}, \,\, 0\leq r \leq m,   \label{pre3}
\end{equation}%
may be considered as an element of $\mathcal{L}$. We will denote the linear
subset of such polynomials by $\mathcal{P}_{m}$.

Let $\mathcal{L}_{0}$ be the subspace of $\mathcal{L}$ consisting of all
functions $f$ such that $\left\Vert f\right\Vert :=\sqrt{\left\langle
f,f\right\rangle }=0$ and $\mathcal{\tilde{L}}$ be the factor - space $%
\mathcal{L}\backslash \mathcal{L}_{0}$. For any class of elements $\hat{g}=f+%
\mathcal{L}_{0}$ of this factor space we set $\left\Vert \hat{g}\right\Vert
_{\widetilde{\mathcal{L}}}=\left\Vert f\right\Vert $. Taking the closure of $%
\mathcal{\tilde{L}}$ with respect to this norm, we obtain the Hilbert space $%
\mathrm{L}_{\sigma }^{2}$. We keep the same symbol $\left\langle
.,.\right\rangle $ for the scalar product in $\mathrm{L}_{\sigma }^{2}$.

Let $\mathrm{L}_{m}$ be the subspace of $\mathrm{L}_{\sigma }^{2}$ generated by
the subset of polynomials $\mathcal{P}_{m}$. By (\ref{e1}) and (\ref{pre2})
for $f,g$ $\mathbf{\in }$ $\mathcal{P}_{m},$%
\begin{equation}
f\left( t\right) =\sum\limits_{r=0}^{m}\xi _{r}t^{r},\;g\left( t\right)
=\sum\limits_{r=0}^{m}\eta _{r}t^{r},\;\xi _{0},\ldots ,\eta _{r}\in \mathbb{%
C},
\end{equation}%
we have
\begin{equation}
\left\langle f,g\right\rangle =\underset{j,k=0}{\overset{m}{\sum }}%
b_{j+k}\xi _{k}\overline{\eta _{j}}.  \label{triv1}
\end{equation}%
Therefore for all distributions $\sigma \left( t\right) $ satisfying (\ref%
{e1}), the restrictions onto $\mathrm{L}_{m}$ of the scalar products
in the corresponding spaces $\mathrm{L}_{\sigma }^{2}$ must coincide. Among
non-decreasing functions $\sigma \left( t\right) $ satisfying (\ref{e1}),
those for which $\mathrm{L}_{\sigma }^{2}=\mathrm{L}_{m}$ are referred to as
\textit{canonical}. It was proven in \cite{ATU} that the set of canonical
solutions of the truncated Stieltjes moment problem is non-empty whenever
the latter is solvable, i.e. whenever the conditions a) - c) of the theorem
hold. By (\ref{triv1}), a canonical $\sigma \left( t\right) $ is a
non-decreasing function having only a finite number $\leq m$ of growth points.

Take some canonical solution $\tilde{\sigma}\left( t\right) $ of the
truncated Stieltjes moment problem for the given set of moments and consider
the self-adjoint operator $\widetilde{A}$ of multiplication by the
independent variable $t$ in the related space $\mathrm{L}_{\widetilde{\sigma
}}^{2}=\mathrm{L}_{m}$. Take the class $\widehat{e}_{0}\subset \mathrm{L}_{m}
$ containing the polynomial $\widehat{e}_{0}(t)\equiv 1$ and the classes
containing the polynomials $\widehat{e}_{k}(t)\equiv t^{k},\;0\leq k\leq m$.
According to the definition of $\widetilde{A}$ we have the representation
\begin{equation}
\widehat{e}_{k}=\widetilde{A}^{k}\widehat{e}_{0}\text{ , }0\leq k\leq m.
\label{defsga}
\end{equation}%
For the unity decomposition $\widetilde{E}_{t},$ $-\infty <t<\infty ,$ of $%
\widetilde{A},$ let us introduce a non - decreasing function $\widetilde{%
\sigma }\left( t\right) ,$ $-\infty <t<\infty $ of bounded variation
\begin{equation}
\widetilde{\sigma }\left( t\right) :=\left\langle \widetilde{E}_{t}\widehat{e%
}_{0},\widehat{e}_{0}\right\rangle _{\mathrm{L}_{m}}.  \label{defsig}
\end{equation}%
By (\ref{defsga}), (\ref{defsig}), and (\ref{e1})
\begin{equation}
b_{j+k}=\left\langle \widehat{e}_{k},\widehat{e}_{j}\right\rangle _{\mathrm{L%
}_{m}}=\left\langle \widetilde{A}^{k}\widehat{e}_{0},\widetilde{A}^{j}%
\widehat{e}_{0}\right\rangle _{\mathrm{L}_{m}}=
\end{equation}%
\begin{equation}
=\int\limits_{0}^{\infty }t^{j+k}d\left\langle \widetilde{E}_{t}\widehat{e}%
_{0},\widehat{e}_{0}\right\rangle _{\mathrm{L}_{n}}=\int\limits_{0}^{\infty
}t^{j+k}d\widetilde{\sigma }\left( t\right) ,\quad \,0\leq j,k\leq m.
\end{equation}%
Let us denote by $\mathrm{L}_{m-1}$ the subspace of $\mathrm{L}_{m}$
generated by polynomials of a degree $\leq m-1$. By definition of $\widetilde{A}$ its restriction $A_{0}$ to the subspace $\mathrm{L}_{m-1}$ is a symmetric operator which actually does not depend on the choice
of the canonical solution of the truncated Stieltjes moment problem.
Therefore each canonical solution $\widetilde{\sigma }\left( t\right) $ of
this problem generates some self-adjoint extension $\widetilde{A}$ of $A_{0}$
in $\mathrm{L}_{m}$. On the other hand, each canonical self-adjoint
extension $\widetilde{A}$ of $A_{0}$ in $\mathrm{L}_{m}$ generates a certain
solution $\widetilde{\sigma }\left( t\right) $ of the truncated Stieltjes
moment problem. By the above formulas such a solution is at the same time a
solution of the Hausdorff problem if and only if the corresponding spectral
function $\widetilde{E}_{t}$ has no points of growth on the half-axis $%
(\Lambda ,\infty )$, i.e. if and only if $\Lambda \cdot I_{m}-\widetilde{A}$%
, where $I_{r}$ is the unity operator in $\mathrm{L}_{r}$, is a non-negative
extension of $\Lambda \cdot I_{m-1}-A_{0}$. Such an extension of $\Lambda
\cdot I_{m-1}-A_{0}$ may exist only if the operator $\Lambda \cdot
I_{m-1}-A_{0}$ is itself non-negative, i.e. the quadratic form of $\Lambda
\cdot I_{m-1}-A_{0}$ is non-negative. But this is the case, since by our
assumptions for a class $\hat{f}\in \mathrm{L}_{m-1}$ containing a
polynomial
\begin{equation}
f\left( t\right) =\sum\limits_{r=0}^{m-1}\xi _{r}t^{r},
\end{equation}%
we have by (\ref{e1a})
\begin{equation}\begin{array}{c}
\begin{array}{cc}
\left\langle \hat{f},\hat{f}\right\rangle _{\mathrm{L}_{m-1}}=\underset{j,k=0}{\overset{m-1}{\sum }%
}b_{j+k}\xi _{k}\overline{\xi _{j}}, & \left\langle A_{0}\hat{f},\hat{f}%
\right\rangle _{\mathrm{L}_{m-1}}= \underset{j,k=0}{\overset{m-1}{\sum }%
}b_{j+k+1}\xi _{k}\overline{\xi _{j}}, \end{array} \\
\left\langle \lbrack \Lambda \cdot I_{m-1}-A_{0}]\hat{f},\hat{f}%
\right\rangle _{\mathrm{L}_{m-1}}=\Lambda \underset{j,k=0}{\overset{m-1}{\sum }%
}b_{j+k}\xi _{k}\overline{\xi _{j}}-\underset{j,k=0}{\overset{m-1}{\sum }}%
b_{j+k+1}\xi _{k}\overline{\xi _{j}}\geq 0. \end{array} \label{nonneg1}
\end{equation}%
If $\mathrm{L}_{m}=\mathrm{L}_{m-1}$, i.e. if $\det \Gamma _{m}=0$, then $%
A_{0}$ is a self-adjoint operator and in this case the truncated Stieltjes
problem has a unique solution $\sigma _{0}(t)$, which is, in line with (\ref{defsig}), generated
by the spectral function $E_{t}^{0}$ of $A_{0}$.
Since
\begin{equation*}
\Lambda \cdot I_{m-1}-A_{0}\geq 0,
\end{equation*}%
then $\sigma _{0}(t)$ is also the unique solution of the truncated Hausdorff
problem.

To describe the class of canonical solutions of Hausdorff
problem if $\mathrm{L}_{m}\neq \mathrm{L}_{m-1}$, i.e. if $\det \Gamma _{m}>0,
$ we remind first how it is done in the less restrictive case of Stieltjes problem.

Note that the condition $\det \Gamma _{m}>0$ according to which $\Gamma _{m}>0$ yields also $\Gamma _{m}^{(1)}>0$ . Indeed, if the quadratic form in of $\Gamma _{m}^{(1)}\geq 0$ vanishes for some set of complex numbers
\begin{equation*}
\xi _{0},...,\xi _{m-1},\underset{0\leq k\leq m-1}{\max }\left\vert \xi
_{k}\right\vert >0,
\end{equation*}%
then, by the condition c) of the theorem, the quadratic form of matrix $\Gamma
_{m-1}^{(2)}=(s_{j+k+2})_{j,k=0}^{m-1}$ also vanishes for the same set and hence $\Gamma _{m-1}^{(2)}$ is non-invertible. But $\Gamma_{m-1}^{(2)}$ is a diagonal block of positive definite matrix $\Gamma _{m}$,
a contradiction.

Let $\mathcal{N}=\mathrm{L}_{m}\ominus \mathrm{L}_{m-1}$, $\dim \mathcal{N}=1
$ and  $P_{\mathcal{N}}$ be the orthogonal projector onto
the one-dimensional subspace $\mathcal{N}$. With respect to the representation of $\mathrm{L}_{m}$ as the orthogonal
sum $\mathrm{L}_{m-1}\oplus \mathcal{N}$ , we can represent a self-adjoint
extension $\widetilde{A}$ of $A_{0}$ as a $2\times 2$ block operator matrix
\begin{equation}
\widetilde{A}=\left(
\begin{array}{cc}
A_{00} & G^{\ast } \\
G & \widetilde{H}%
\end{array}%
\right) ,  \label{block}
\end{equation}%
where $A_{00}$ is a symmetric operator in $\mathrm{L}_{m-1}$, the quadratic
form of which coincides with that of $A_{0}$, $G=P_{\mathcal{N}}A_{0\mid
\mathrm{L}_{m-1}}$  and $%
\widetilde{H}$ is a self-adjoint operator in $\mathcal{N}$, which just specifies
a certain extension $\widetilde{A}$. By (\ref{nonneg1}) $A_{00}$ is a positive
definite operator. Using the Schur-Frobenius factorization we can represent $%
\widetilde{A}$ in the form
\begin{equation}
\widetilde{A}=\left(
\begin{array}{cc}
I & 0 \\
GA_{00}^{-1} & I%
\end{array}%
\right) \left(
\begin{array}{cc}
A_{00} & 0 \\
0 & \widetilde{H}-GA_{00}^{-1}G^{\ast }%
\end{array}%
\right) \left(
\begin{array}{cc}
I & A_{00}^{-1}G^{\ast } \\
0 & I%
\end{array}%
\right) .  \label{Schur}
\end{equation}%
By this representation the extension $\widetilde{A}\geq 0$ if and only if $%
\widetilde{H}\geq GA_{00}^{-1}G^{\ast }$. We see that those and only those self-adjoint operators $\widetilde{H}$ in $\mathcal{N}$ which have form
\begin{equation}
\widetilde{H}=GA_{00}^{-1}G^{\ast }+Q,\,Q\geq 0,  \label{canonic}
\end{equation}%
with a non-negative operator $Q$ in $\mathcal{N}$, generate non-negative extensions $\widetilde{A}$ in $\mathrm{L}_{m}$ of $A_{0}$.  and thereby generate canonical solutions of the Stieltjes
problem. But only those of them are solutions of the Hausdorff problem, for
which the corresponding non-negative extension $\widetilde{A}$ satisfies the
condition
\begin{equation}
\widetilde{A}-\Lambda \cdot I_{m}\leq 0.  \label{hausd}
\end{equation}%
To express the condition (\ref{hausd}) in terms of given moments (\ref{c}) let us consider the Schur-Frobenius representation for $\widetilde{A}-\lambda \cdot I_{m}$ assuming that $\lambda >\Lambda $.
Due to the condition d), this guarantees the invertibility of $%
A_{00}-\lambda \cdot I_{m-1}$. We have
\begin{equation}
\begin{array}{c}
\widetilde{A}-\lambda \cdot I_{m}=\left(
\begin{array}{cc}
I & 0 \\
G[A_{00}-\lambda \cdot I_{m-1}]^{-1} & I%
\end{array}%
\right) \times  \\
\left(
\begin{array}{cc}
A_{00}-\lambda \cdot I_{m-1} & 0 \\
0 & \widetilde{H}-\lambda \cdot I_{\mathcal{N}}-G[A_{00}-\lambda \cdot
I_{m-1}]^{-1}G^{\ast }%
\end{array}%
\right) \times  \\
\left(
\begin{array}{cc}
I & [A_{00}-\lambda \cdot I_{m-1}]^{-1}G^{\ast } \\
0 & I%
\end{array}%
\right) .%
\end{array}
\label{Schur1}
\end{equation}%
By virtue of (\ref{Schur1}), an extension $\widetilde{A}$ satisfies the
condition
\begin{equation}
\widetilde{A}-\lambda \cdot I_{m}\leq 0  \label{ineq}
\end{equation}%
if and only if $A_{00}-\lambda \cdot I_{m-1}<0$, what is provided by the
condition d), and
\begin{equation}
\widetilde{H}-\lambda \cdot I_{\mathcal{N}}-G[A_{00}-\lambda \cdot
I_{m-1}]^{-1}G^{\ast }<0.  \label{hausd1}
\end{equation}%
Let us denote by $A_{\mu }$ the \textit{minimal} non-negative extension of $%
A_{0}$, for which $Q=0$ in (\ref{canonic}). This and only
this canonical extension is non-invertible. For $A_{\mu }$ the block $%
\widetilde{H}$ is simply $GA_{00}^{-1}G^{\ast }$. The inequality (\ref%
{hausd1}) holds for some non-negative extension $\widetilde{A}$ if and only
it is true for the minimal extension $\widetilde{A}_{\mu }$ in (\ref{canonic}%
), that is if
\begin{equation}
GA_{00}^{-1}G^{\ast }-\lambda I_{\mathcal{N}}-G[A_{00}-\lambda \cdot
I_{m-1}]^{-1}G^{\ast }\leq 0,\,\lambda \leq \Lambda .  \label{cond1}
\end{equation}%
Since the function of $\lambda $ in the left hand side of (\ref{cond1}) is
non-increasing, then the extension $\widetilde{A}_{0}$ satisfies the
inequality (\ref{ineq}) if and only if
\begin{equation}
\Lambda \geq G[\Lambda \cdot I_{m-1}-A_{00}]^{-1}G^{\ast
}+GA_{00}^{-1}G^{\ast }.  \label{cond2}
\end{equation}%
In what follows, $\left\{e_{k}\right\}_{0}^{m}$ denote the natural basis $\mathfrak{B}$ in $\mathrm{L}_{m}$ of monomials $\left\{t^{k}\right\}_{0}^{m}$. To represent $GA_{00}^{-1}G^{\ast }$ in a more explicit form we introduce
in $\mathrm{L}_{m}$ operators $P_{\mathfrak{N}}$ and $T$ in $\mathrm{L}_{m}$, which for the basis $\mathfrak{B}$ act as multiplication by $(m+1)\times (m+1)$  matrices
\begin{equation*}\begin{array}{cc}

P_{\mathfrak{N}}=\left(
\begin{array}{cccc}
0 & 0& \cdots  & 0 \\
0 & 0& \cdots  & 0 \\
\vdots  & \vdots &\vdots & \vdots  \\
0 & 0&\cdots  & 1%
\end{array}%
\right),  &
 T=\left(
\begin{array}{ccccc}
0 & 0 & 0 & \cdots  & 0 \\
1 & 0 & 0 & \cdots  & 0 \\
0 & 1 & 0 & \cdots  & 0 \\
\vdots  & \vdots  & \vdots  & \vdots  & \vdots  \\
0 & 0 & \cdots  & 1 & 0%
\end{array}%
\right) .
\end{array}
\end{equation*}%
The symmetric operator $A_{0}$ in $\mathrm{L}_{m}$ is the restriction of $T$
to the subspace $\mathrm{L}_{m-1}$. Let $\widetilde{\Gamma }_{m-1}^{(1)}$ be
the $(m+1)\times (m+1)$ block operator matrix
\begin{equation}
\widetilde{\Gamma }_{m-1}^{(1)}=\left(
\begin{array}{cc}
\Gamma _{m-1}^{(1)} & 0_{m,1} \\
0_{1,m} & 0_{1,1}%
\end{array}%
\right) ,
\end{equation}%
where $0_{n,m}$ are the $n\times m$ null-matrices. Note that for $\mathbf{%
\xi }\in \mathrm{L}_{m-1}$ and any $\mathbf{\eta }\in \mathrm{L}_{m}$ we
have
\begin{eqnarray*}
&<&A_{0}\mathbf{\xi ,\eta }>_{ \mathrm{L}_{m}}=<T\mathbf{\xi ,\eta }>_{ \mathrm{L}_{m}}=\left( \widetilde{\Gamma
}_{m-1}^{(1)}\mathbf{\xi ,\eta }\right)_{\mathbb{C}_{m}} +\left( P_{\mathfrak{N}}\Gamma _{m}T%
\mathbf{\xi ,\eta }\right)_{\mathbb{C}_{m}}  \\
&=&<\Gamma _{m}^{-1}\widetilde{\Gamma }_{m-1}^{(1)}\mathbf{\xi ,\eta >}%
+<\Gamma _{m}^{-1}P_{\mathfrak{N}}\Gamma _{m}T\mathbf{\xi ,\eta >.}
\end{eqnarray*}%
Hence
\begin{equation}
A_{0\mid \mathbb{C}_{m-1}}=\Gamma _{m}^{-1}\widetilde{\Gamma }_{m-1\mid
\mathbb{C}_{m-1}}^{(1)}+\Gamma _{m}^{-1}P_{\mathfrak{N}}\Gamma _{m}T_{\mid
\mathbb{C}_{m-1}}.  \label{azero}
\end{equation}%
By (\ref{azero}) any self-adjoint extension $\widetilde{A}$ of $A$ in $%
\mathrm{L}_{n}$ has the form
\begin{eqnarray}
\widetilde{A} &=&\Gamma _{m}^{-1}\widetilde{\Gamma }_{m-1}^{(1)}P_{\mathfrak{%
N}}^{\perp }+\Gamma _{m}^{-1}P_{\mathfrak{N}}\Gamma _{m}TP_{\mathfrak{N}%
}^{\perp }+  \label{aext} \\
&&\Gamma _{m}^{-1}P_{\mathfrak{N}}^{\perp }T^{\ast }\Gamma _{m}P_{\mathfrak{N%
}}+\Gamma _{m}^{-1}\widetilde{H},  \notag
\end{eqnarray}%
where $P_{\mathfrak{N}}^{\perp }=I-P_{\mathfrak{N}}$,
\begin{equation*}
\widetilde{H}=\left(
\begin{array}{cc}
0 & 0 \\
0 & H%
\end{array}%
\right) ,
\end{equation*}%
 and $H$ is some real number,
which defines the extension $\widetilde{A}$. In a more detailed form,
\begin{eqnarray}
\widetilde{A} &=&\Gamma _{m}^{-1}\left(
\begin{tabular}{ll}
$\quad \quad \ \ \Gamma _{m-1}^{(1)}$ & $%
\begin{array}{c}
b_{m+1} \\
\vdots  \\
b_{2m}%
\end{array}%
$ \\
$%
\begin{array}{ccc}
b_{m+1} & \cdots  & b_{2n}%
\end{array}%
$ & $\quad H$%
\end{tabular}%
\ \right) =  \label{aext1} \\
&=&T+\Gamma _{m}^{-1}\left(
\begin{tabular}{ll}
$\quad \quad 0_{m,m}$ & $%
\begin{array}{c}
b_{m+1} \\
\vdots  \\
b_{2m}%
\end{array}%
$ \\
$%
\begin{array}{ccc}
0 & \cdots  & 0%
\end{array}%
$ & $\quad H$%
\end{tabular}%
\ \right) .  \label{aext2}
\end{eqnarray}%
Observe, as before, that the invertibility of $\Gamma _{m}$ and the
condition c) of Theorem \ref{first} guarantee the invertibility of the matrix $%
\Gamma _{m-1}^{(1)}$. Write $\Gamma _{m-1}^{(1)-1}=(s_{jk})_{j,k=0}^{m-1}$
and put
\begin{equation*}
\left( \widetilde{\Gamma }_{m-1}^{(1)}\right) _{\mathrm{cond}}^{-1}=\left(
\begin{array}{cc}
\Gamma _{m-1}^{(1)-1} & 0_{m,1} \\
0_{1,m} & 0_{1,1}%
\end{array}%
\right)
\end{equation*}%
By the above argument the operator defined by the block matrix (\ref{aext1})
is non-negative if and only if
\begin{equation*}
\widetilde{H}-P_{\mathfrak{N}}\Gamma _{m}TP_{\mathfrak{N}}^{\perp }\left(
\widetilde{\Gamma }_{m-1}^{(1)}\right) _{\mathrm{cond}}^{-1}P_{\mathfrak{N}%
}^{\perp }T^{\ast }\Gamma _{m}P_{\mathfrak{N}}\geq 0,
\end{equation*}%
or, equivalently, if and only if
\begin{equation}
H-\overset{m-1}{\underset{j,k=0}{\sum }}b_{m+j+1}s_{jk}b_{m+k+1}\geq 0.
\label{stilt1}
\end{equation}%
Since
\begin{equation*}
Q:=\overset{m-1}{\underset{j,k=0}{\sum }}b_{m+j+1}s_{jk}b_{m+k+1}
\end{equation*}%
is positive, all numbers $H$ generating non-negative extensions $\widetilde{A%
}$ and hence the solutions of the Stieltjes problem, must be positive
definite and, moreover, satisfy the inequality $H\geq Q$. Notice that the
requirement $\widetilde{A}> 0$ excludes the equality in (\ref{stilt1}).

To express the inequality (\ref{cond2})interms of the given moments $%
b_{0},b_{1},b_{2},\ldots ,b_{2m}$ remind that the operator $A_{0}$ can be
represented as the operator of multiplication by the independent variable in
the space $\mathrm{L}_{m-1}$ of polynomials of degree $\leq m$ defined on
the subspace $\mathrm{L}_{m-1}$ of polynomials of degree $\leq m-1$. Let us
denote by $d_{k}(t),\,k=0,...,m,$ the set of orthogonal polynomials in $%
\mathrm{L}_{\sigma }^{2}$ with respect to any measure $d\sigma (t)$
satisfying (\ref{e1}),
\begin{equation}
d_{0}(t)=\frac{1}{\sqrt{b_{0}}},d_{k}(t)=\frac{1}{\sqrt{\Delta _{k}\Delta
_{k-1}}}\det \left\Vert
\begin{array}{cccc}
b_{0} & b_{1} & ... & b_{k} \\
b_{1} & b_{2} & ... & b_{k+1} \\
. & . & ... & . \\
b_{k-1} & b_{k} & ... & b_{2k-1} \\
1 & t & ... & t^{k}%
\end{array}%
\right\Vert ,\,\,k=1,2,...,m,  \label{orth}
\end{equation}%
where
\begin{equation}
\Delta _{k}=\det \left\Vert
\begin{array}{cccc}
b_{0} & b_{1} & ... & b_{k} \\
b_{1} & b_{2} & ... & b_{k+1} \\
. & . & ... & . \\
b_{k} & b_{k+1} & ... & b_{2k}%
\end{array}%
\right\Vert .  \label{determ}
\end{equation}%
The operator $A_{00}$ in $\mathrm{L}_{m-1}$ acts thereafter on arbitrary
polynomials $q\in \mathrm{L}_{m-1}$ as follows
\begin{equation}
\left( A_{00}q\right) (t)=tq(t)-\beta _{m-1}\langle q,d_{m-1}\rangle _{%
\mathrm{L}_{m-1}}d_{m}(t),\,\beta _{m-1}=\frac{\sqrt{\Delta _{m}\Delta _{m-2}%
}}{\Delta _{m-1}}.  \label{Azerozero}
\end{equation}%
Hence, for $q\in \mathrm{L}_{m-1}$ we have
\begin{equation}
\left( Gq\right) (t)=\left( [A_{0}-A_{00}]q\right) (t)=\beta _{m-1}\langle
q,d_{m-1}\rangle _{\mathrm{L}_{m-1}}d_{m}(t),  \label{A00add}
\end{equation}%
and
\begin{equation}
\left( \left[ A_{00}-zI_{m-1}\right] ^{-1}q\right) (t)=\frac{1}{t-z}\left[
q(t)-\frac{q(z)}{d_{m}(z)}d_{m}(t)\right] .  \label{resolv0}
\end{equation}%
For the scalar Hausdorff problem $\dim \mathcal{N}=\dim [\mathrm{L}%
_{m}\ominus \mathrm{L}_{m-1}]=1$ and $d_{m}(t)$ is a unit vector in $%
\mathcal{N}$. Therefore in the scalar case by (\ref{A00add}) and (\ref%
{resolv0}), and the condition (\ref{cond1}), any measure $d\sigma (t)$
satisfying (\ref{e1}) has the form
\begin{equation}
\begin{array}{c}
\beta _{m-1}^{2}\int\limits_{0 }^{\infty }\frac{1}{t}\left[ d_{m-1}(t)-%
\frac{d_{m-1}(0)}{d_{m}(0)}d_{m}(t)\right] d_{m-1}(t)d\sigma (t)-\lambda
\\
-\beta _{m-1}^{2}\int\limits_{0}^{\infty }\frac{1}{t-\lambda }\left[
d_{m-1}(t)-\frac{d_{m-1}(\lambda )}{d_{m}(\lambda )}d_{m}(t)\right]
d_{m-1}(t)d\sigma (t)\leq 0,\,\lambda \geq \Lambda .%
\end{array}
\label{cond3}
\end{equation}%
Notice further that for any $\lambda ,\mu $ in accordance to the
Christoffel-Darboux identity for orthogonal polynomials  \cite{Akh}
\begin{equation}
\begin{array}{c}
\beta _{m-1}\frac{d_{m-1}(\lambda )d_{m}(\mu )-d_{m-1}(\mu )d_{m}(\lambda )}{%
\mu -\lambda } \\
=\sum\limits_{k=0}^{m-1}d_{k}(\lambda )d_{k}(\mu ) \\
=-\frac{1}{\Delta _{m-1}}\det \left\Vert
\begin{array}{ccccc}
0 & 1 & \lambda  & ... & \lambda ^{m-1} \\
1 & b_{0} & b_{1} & ... & b_{m-1} \\
\mu  & b_{1} & b_{2} & ... & b_{m} \\
. & . & . & ... & . \\
\mu ^{m-1} & b_{m-1} & b_{m} & ... & b_{2m-2}%
\end{array}%
\right\Vert .
\end{array}
\label{ChD}
\end{equation}%
By (\ref{ChD}) one can
rewrite (\ref{cond3}) in the form
\begin{equation}
\beta _{m-1}\frac{d_{m-1}(\lambda )d_{m}(0)-d_{m-1}(0)d_{m}(\lambda )}{%
d_{m}(\lambda )d_{m}(0)}-\lambda \leq 0,\,\lambda \geq \Lambda .
\label{cond4}
\end{equation}%
By our assumptions the consecutive numbers $b_{1},...,b_{2m-1}$ can be
considered as the moments
\begin{equation*}
b_{0}^{1}=b_{1},...,b_{2m-2}^{1}=b_{2m-1}
\end{equation*}%
of a non-negative measure $td\sigma (t)$, where $d\sigma (t)$ is any
solution of the truncated Stieltjes moment for the given sequence $%
b_{0},...,b_{2m}$. Let us denote by $d_{k}^{1},\,$\ $k=0,...,m-1,$ the
system of orthogonal polynomials for the set of moments $%
b_{0}^{1},...,b_{2m-2}^{1}$ and by $\Delta _{k}^{1}$ the determinants $\Vert
b_{p+q}^{1}\Vert _{p,q=0}^{k}$. It follows from (\ref{cond4}) and (\ref{ChD}%
) that the condition (\ref{cond1}) can be represented in the equivalent form
\begin{equation}
\sqrt{\frac{\Delta _{m}\Delta _{m-2}^{1}}{\Delta _{m-1}\Delta _{m-1}^{1}}}%
\frac{d_{m-1}^{1}(\lambda )}{d_{m}(\lambda )}\leq 1.  \label{critHausd}
\end{equation}%
The last inequality permits to specify the condition d) in the solvability
criterion of the truncated Hausdorff problem.

\begin{thm}
\label{second} For the given system of moments $b_{0},...,b_{2m}$ satisfying
conditions a) - c) of Theorem \ref{first} there is at least one solution of
the truncated Stieltjes problem with non-negative measure concentrated on
the interval $[0,\Lambda ]$, i.e., there is a solution of the truncated
Hausdorff problem for the interval $[0,\Lambda ]$ if and only if the matrix $%
\Lambda \Gamma _{m-1}-\Gamma _{m-1}^{(1)}$ is non-negative and for any $%
\lambda \leq \Lambda $ the inequality (\ref{critHausd}) holds.

Under the above conditions the truncated Hausdorff problem has unique
solution if and only if
\begin{equation}
\sqrt{\frac{\Delta _{m}\Delta _{m-2}^{1}}{\Delta _{m-1}\Delta _{m-1}^{1}}}%
\frac{d_{m-1}^{1}(\Lambda )}{d_{m}(\Lambda )}=1.  \label{uniqHausd}
\end{equation}
\end{thm}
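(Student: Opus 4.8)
The plan is to read Theorem \ref{second} as the final assembly of the operator-theoretic analysis carried out above, translating the abstract extension criterion into the moment and polynomial data. By conditions a)--c) of Theorem \ref{first} the truncated Stieltjes problem is solvable, so its canonical solutions are in one-to-one correspondence with the self-adjoint extensions $\widetilde A$ of the symmetric operator $A_0$ on $\mathrm{L}_m$, encoded by the single real parameter $H$ in (\ref{aext1}). A canonical Stieltjes solution is simultaneously a Hausdorff solution precisely when its spectral function has no growth on $(\Lambda,\infty)$, i.e. when $\widetilde A\le\Lambda\cdot I_m$ together with $\widetilde A\ge 0$. Thus the existence part amounts to deciding when there is a value of $H$ yielding an extension that is simultaneously non-negative and bounded above by $\Lambda\cdot I_m$.

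First I would make the two constraints explicit as bounds on $H$. Non-negativity of $\widetilde A$ gives, via the Schur--Frobenius factorization (\ref{Schur}) and (\ref{stilt1}), the lower bound $H\ge Q:=GA_{00}^{-1}G^{\ast}$. For the upper constraint I would apply the same factorization to $\Lambda\cdot I_m-\widetilde A$; this requires $\Lambda\cdot I_{m-1}-A_{00}\ge 0$, which is exactly condition d), the non-negativity of $\Lambda\Gamma_{m-1}-\Gamma_{m-1}^{(1)}$, and (when this matrix is invertible) yields the upper bound $H\le\Lambda-G[\Lambda\cdot I_{m-1}-A_{00}]^{-1}G^{\ast}$. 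A compatible $H$ then exists if and only if the lower bound does not exceed the upper one, which is inequality (\ref{cond2}). Since the scalar function $\lambda\mapsto GA_{00}^{-1}G^{\ast}-\lambda-G[A_{00}-\lambda\cdot I_{m-1}]^{-1}G^{\ast}$ has derivative $-1-G[A_{00}-\lambda\cdot I_{m-1}]^{-2}G^{\ast}<0$ and is therefore strictly monotone, the whole $\lambda$-family (\ref{cond1}) collapses to its binding endpoint, so (\ref{cond2}) is precisely the content of (\ref{cond1}) at $\lambda=\Lambda$.

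Next I would convert (\ref{cond2}) into the polynomial criterion (\ref{critHausd}). Using the resolvent action (\ref{resolv0}) and $Gq=\beta_{m-1}\langle q,d_{m-1}\rangle d_m$ from (\ref{A00add}), the two quadratic forms reduce to the integrals in (\ref{cond3}); the Christoffel--Darboux identity (\ref{ChD}), together with the reproducing property $\int_0^\infty K_{m-1}(\lambda,t)d_{m-1}(t)\,d\sigma(t)=d_{m-1}(\lambda)$ for the kernel $K_{m-1}(\lambda,t)=\sum_{k=0}^{m-1}d_k(\lambda)d_k(t)$, collapses each integral to a ratio $d_{m-1}(\cdot)/d_m(\cdot)$ and produces (\ref{cond4}). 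Finally I would identify $K_{m-1}(\lambda,0)$, up to an explicit normalizing constant, with the $(m-1)$-st orthonormal polynomial $d_{m-1}^1$ of the Christoffel-transformed measure $t\,d\sigma(t)$; dividing (\ref{cond4}) by $\lambda>0$ and inserting this identification turns it into (\ref{critHausd}), with the constant $\sqrt{\Delta_m\Delta_{m-2}^1/(\Delta_{m-1}\Delta_{m-1}^1)}$ fixed by comparing leading coefficients and norms through the determinant formulas (\ref{orth})--(\ref{determ}).

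For the uniqueness statement I would note that the admissible parameters form the interval $Q\le H\le\Lambda-G[\Lambda\cdot I_{m-1}-A_{00}]^{-1}G^{\ast}$, so distinct Hausdorff solutions exist exactly when this interval is nondegenerate; it collapses to a single point, forcing the solution to be unique, precisely when equality holds in (\ref{cond2}), i.e. in (\ref{critHausd}) at $\lambda=\Lambda$, which is (\ref{uniqHausd}). In that boundary case the minimal non-negative extension already saturates $\widetilde A\le\Lambda\cdot I_m$, so the spectral measure is pinned to its extremal support and the problem is determinate. The main obstacle I anticipate is this last translation step: tracking the exact normalizing constant and the signs in passing from the kernel $K_{m-1}(\lambda,0)$ to $d_{m-1}^1$ through the Christoffel transform, and checking that the resulting behaviour in $\lambda$ is what makes ``for all $\lambda\le\Lambda$'' equivalent to the single binding inequality at $\lambda=\Lambda$; the operator-theoretic existence argument itself is comparatively routine once the two bounds on $H$ are in place.
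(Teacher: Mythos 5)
Your proposal is correct and follows essentially the same route as the paper, whose proof of Theorem \ref{second} is exactly the development preceding it: canonical extensions parameterized by a single real number, the lower bound from non-negativity and the upper bound from $\widetilde{A}\leq\Lambda\cdot I_{m}$ via the Schur--Frobenius factorizations (\ref{Schur}) and (\ref{Schur1}), the monotonicity collapse of the $\lambda$-family (\ref{cond1}) to the single inequality (\ref{cond2}), its translation into (\ref{critHausd}) through (\ref{resolv0}), the Christoffel--Darboux identity (\ref{ChD}) and the polynomials $d_{k}^{1}$ of the measure $t\,d\sigma(t)$, and uniqueness as degeneration of the admissible parameter interval, which is (\ref{uniqHausd}). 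The only differences are that you make explicit two steps the paper leaves implicit, namely the derivative computation establishing monotonicity in $\lambda$ and the identification of the kernel $\sum_{k=0}^{m-1}d_{k}(\lambda)d_{k}(0)$ with $d_{m-1}^{1}(\lambda)$ up to the normalizing constant.
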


\section{Description of canonical solutions of the truncated Hausdorff
problem}

Let us assume that the conditions of Theorem \ref{second} hold. We denote by
$e_{r}(t),$ $r=1,...,m,$ the system of conjugate polynomials:
\begin{equation*}
e_{r}(t)=\int\limits_{-\infty }^{\infty }\frac{d_{r}(t)-d_{r}(t^{\prime })}{%
t-t^{\prime }}d\sigma (t^{\prime }),
\end{equation*}%
where $d\sigma (t)$ is any solution of the truncated Hamburger problem for
the set of moments $b_{0},...,b_{2m}$. All canonical solutions (that is
those generated by the self-adjoint extensions of the symmetric operator $%
A_{0}$ in $\mathrm{L}_{m}$) of the truncated Stieltjes moment problem are,
according to \cite{ATU}, described by the formula
\begin{equation}
\overset{\infty }{\underset{0}{\int }}\frac{d\sigma _{H}\left( t\right) }{t-z%
}=-\frac{e_{m}\left( z\right) (R_{H}+z)-e_{m-1}\left( z\right) }{d_{m}\left(
z\right) \left( R_{H}+z\right) -d_{m-1}\left( z\right) },  \label{stielt1}
\end{equation}%
\begin{equation}
R_{H}=\left( \Gamma _{m}^{-1}\right) _{mm}^{-1}\Lambda _{m}-H\left( \Gamma
_{m}^{-1}\right) _{mm}\ ,\quad \mathrm{Im}z>0,  \label{mom8b}
\end{equation}%
where
\begin{equation}
\Lambda _{m}=\left( \Gamma _{m}^{-1}\right) _{m-1,m}-b_{m+1}\left( \Gamma
_{m}^{-1}\right) _{mm}^{-2}  \label{stielt3}
\end{equation}%
and is $H$ the parameter such that
\begin{equation}
H=\tau +\overset{m-1}{\underset{j,k=0}{\sum }}b_{m+j+1}s_{jk}b_{m+k+1},
\label{stielt4}
\end{equation}%
where $\tau $ is any non-negative number. The application of the above
arguments to the Hausdorff problem yields

\begin{thm}
Among all canonical solutions of the truncated Stieltjes problem for a given
set of moments $b_{0},...,b_{2m}$ which satisfy the conditions a) - d) of
Theorem \ref{first}, those and only those are canonical solution of the
truncated Hausdorff problem for which the parameter $\tau $ in (\ref{stielt4}%
) satisfies the condition
\begin{equation}
0\leq \tau \leq \Lambda \left( 1-\sqrt{\frac{\Delta _{m}\Delta _{m-2}^{1}}{%
\Delta _{m-1}\Delta _{m-1}^{1}}}\frac{d_{m-1}^{1}(\Lambda )}{d_{m}(\Lambda )}%
\right) .  \label{hausd2}
\end{equation}
\end{thm}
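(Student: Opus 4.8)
The plan is to take the parametrization of canonical Stieltjes solutions already in hand and convert the single extra requirement — that the generating extension $\widetilde{A}$ actually be a Hausdorff solution — into a bound on the free parameter $\tau$. By (\ref{canonic}) and (\ref{stielt4}) every canonical solution of the truncated Stieltjes problem is generated by a self-adjoint extension $\widetilde{A}$ of $A_0$ whose one-dimensional corner block acts on $\mathcal{N}$ as multiplication by the real number $H=\tau+GA_{00}^{-1}G^{\ast}$, where $GA_{00}^{-1}G^{\ast}=\sum_{j,k=0}^{m-1}b_{m+j+1}s_{jk}b_{m+k+1}$ and $\tau\geq 0$ is arbitrary. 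As explained before (\ref{hausd}), such a solution is in addition a solution of the truncated Hausdorff problem precisely when $\widetilde{A}\leq\Lambda\cdot I_m$. Thus the whole task is to read off this operator inequality as an inequality for $\tau$.

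First I would insert $\widetilde{A}-\Lambda\cdot I_m$ into the Schur--Frobenius factorization (\ref{Schur1}) evaluated at $\lambda=\Lambda$. Since condition d) of Theorem~\ref{first} gives $A_{00}-\Lambda\cdot I_{m-1}<0$, the factorization shows that $\widetilde{A}\leq\Lambda\cdot I_m$ is equivalent to the scalar inequality (\ref{hausd1}) at the endpoint, namely
\begin{equation*}
H-\Lambda-G[A_{00}-\Lambda\cdot I_{m-1}]^{-1}G^{\ast}\leq 0 .
\end{equation*}
Here I keep $H$ (equivalently $\tau$) free; I do not specialize to the minimal extension as in the derivation of (\ref{cond2}). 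To justify that the endpoint $\lambda=\Lambda$ is the binding one I would reuse the monotonicity in $\lambda$ of the resolvent term already exploited in passing from (\ref{cond1}) to (\ref{cond2}): adding the constant $H$ does not affect that monotonicity, and $\widetilde{A}\leq\Lambda\cdot I_m$ is equivalent to $\widetilde{A}\leq\lambda\cdot I_m$ for every $\lambda\geq\Lambda$, whose strongest instance is the limit $\lambda\downarrow\Lambda$.

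The remaining step is substitution. Writing $H=\tau+GA_{00}^{-1}G^{\ast}$, the displayed inequality becomes
\begin{equation*}
\tau+\bigl(GA_{00}^{-1}G^{\ast}-G[A_{00}-\Lambda\cdot I_{m-1}]^{-1}G^{\ast}\bigr)-\Lambda\leq 0 .
\end{equation*}
The parenthesized combination is exactly the expression whose value at $\lambda=\Lambda$ was computed, via (\ref{A00add}), (\ref{resolv0}) and the Christoffel--Darboux identity (\ref{ChD}), in passing from (\ref{cond3}) through (\ref{cond4}) to (\ref{critHausd}); it equals $\Lambda\sqrt{\Delta_m\Delta_{m-2}^{1}/(\Delta_{m-1}\Delta_{m-1}^{1})}\,d_{m-1}^{1}(\Lambda)/d_m(\Lambda)$. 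Solving for $\tau$ produces the upper bound in (\ref{hausd2}), and intersecting it with the Stieltjes constraint $\tau\geq 0$ gives the full range claimed.

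The main obstacle I expect is bookkeeping rather than conceptual. One must keep the sign conventions in the resolvent quadratic form consistent, so that $G[A_{00}-\lambda\cdot I_{m-1}]^{-1}G^{\ast}$ is correctly identified with $-\beta_{m-1}d_{m-1}(\lambda)/d_m(\lambda)$ and its value at $\lambda=0$ with $GA_{00}^{-1}G^{\ast}$, and one must settle the two endpoints. The lower endpoint $\tau=0$ is the minimal extension $A_\mu$, whose non-invertibility places a growth point at the origin, which lies in $[0,\Lambda]$; the upper endpoint makes $\Lambda$ an eigenvalue of $\widetilde{A}$, i.e. places a growth point exactly at $\Lambda$, still inside the closed interval. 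Hence both endpoints are admissible and the inequalities in (\ref{hausd2}) are non-strict, as stated.
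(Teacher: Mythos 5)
Your proof is correct and is essentially the paper's own argument: the paper gives no separate proof of this theorem (it says only that it follows by ``application of the above arguments''), and your reconstruction --- keeping $H=\tau+GA_{00}^{-1}G^{\ast}$ general in the Schur--Frobenius factorization (\ref{Schur1}), reducing $\widetilde{A}\leq\Lambda\cdot I_{m}$ to the scalar Schur-complement inequality, and evaluating $GA_{00}^{-1}G^{\ast}-G[A_{00}-\Lambda\cdot I_{m-1}]^{-1}G^{\ast}$ via (\ref{A00add}), (\ref{resolv0}) and the Christoffel--Darboux identity (\ref{ChD}) to obtain the right-hand side of (\ref{hausd2}) --- is exactly the machinery of Section 3 applied with the parameter left free. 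The one small imprecision, that condition d) only yields $A_{00}-\Lambda\cdot I_{m-1}\leq 0$ rather than the strict negativity you assert at $\lambda=\Lambda$, is already repaired by your own limiting argument $\lambda\downarrow\Lambda$ using monotonicity of the Schur complement in $\lambda$.
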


\section{Truncated Hamburger problem with gap}

Having disposed of the problems related to the truncated Hausdorff problem,
one can turn now directly to the local moment problem for a given interval $%
[0,\Lambda ],$ i.e., the truncated Hamburger moment problem in which
along with the first $2n+1$ moments
\begin{equation}
a_{k}=\int\limits_{-\infty }^{\infty }t^{k}d\sigma (t),\,\,0\leq k\leq 2n,
\label{loc1}
\end{equation}%
of the sought measure $d\sigma (t),$ its $2m+1,\,n\leq m,$ local moments
\begin{equation}
b_{k}=\int\limits_{0}^{\Lambda }t^{k}d\sigma (t),\,\,0\leq k\leq 2m,
\label{loc2}
\end{equation}%
are given also.

A possible approach to the solution of the local moment problem consists in
the representation of the sought measure $d\sigma (t)$ as the sum
\begin{equation*}
d\sigma (t)=d\sigma _{\Lambda }(t)+d\sigma _{\Lambda }^{\perp }(t),
\end{equation*}%
where the measure $d\sigma _{\Lambda }(t)$ is concentrated on the segment $%
[0,\Lambda ]$, while the function $\sigma _{\Lambda }^{\perp }(t)$ has no
growth points on $[0,\Lambda ]$.

The retrieval of $d\sigma _{\Lambda }(t)$ is reduced to the above Hausdorff
problem on the interval $[0,\Lambda ]$ for the given set of moments $%
b_{0},...,b_{2m}$. The quest of $d\sigma _{\Lambda }^{\perp }(t)$ consists
in the search of some special solutions $d\widetilde{\sigma }(t)$ of the
truncated Hamburger moment problem, which satisfy the additional restriction
\begin{equation}
\widetilde{\sigma }(\Lambda -0)-\widetilde{\sigma }(+0)=0,  \label{gap}
\end{equation}%
for the set of moments
\begin{equation}
\begin{array}{c}
c_{k}=\int\limits_{0}^{\Lambda }t^{k}d\widetilde{\sigma }(t)= \\
\int\limits_{-\infty }^{\Lambda -0}t^{k}d\sigma _{\Lambda }^{\perp
}(t)+\int\limits_{\Lambda +0}^{\infty }t^{k}d\sigma _{\Lambda }^{\perp }(t)=
\\
\int\limits_{-\infty }^{\infty }t^{k}d[\sigma (t)-\sigma _{\Lambda
}(t)]=a_{k}-b_{k},\,\,k=0,...,2n.%
\end{array}
\label{loc3}
\end{equation}%
We call the latter moment problem the truncated Hamburger moment problem
with the gap $[0,\Lambda ]$. We see that the  local moment
problem formulated above is reduced to the truncated moment problem with the gap.

The proposed approach to the solution of this problem consists in the
selection among the solutions $d\widetilde{\sigma }(t)$ of the truncated
Hamburger moment problem for the set of moments $c_{0},...,c_{2n}$ of those
satisfying additional condition (\ref{gap}). In this way, we notice first
that the necessary condition of solvability of the Hamburger problem for the
given moments is positive definiteness of the Hankel matrix $\widetilde{%
\Gamma }_{n}=\left( c_{j+k}\right) _{j,k=0}^{n}$. We will assume further
that this condition holds.

Let $d\widetilde{\sigma }(t)$ be a solution of the problem with the gap.
Since $t(t-\Lambda )\geq 0$ on $\mathbb{E}\setminus \lbrack 0,\Lambda ]$
then for any polynomial
\begin{equation*}
P_{n-1}(t)=\xi _{0}+\xi _{1}\ t+\xi _{2}\ t^{2}+\ldots +\xi _{n-2}\ t^{n-2}
\end{equation*}%
we have
\begin{equation}
\begin{array}{c}
\int\limits_{-\infty }^{\infty }t(t-\Lambda )|P_{n-1}(t)|^{2}d\widetilde{%
\sigma }(t) \\ =\int\limits_{-\infty }^{\Lambda }t(t-\Lambda )|P_{n-1}(t)|^{2}d%
\widetilde{\sigma }(t)+\int\limits_{\Lambda }^{\infty }t(t-\Lambda
)|P_{n-1}(t)|^{2}d\widetilde{\sigma }(t) \\
=\underset{j,k=0}{\overset{n-2}{\sum }}\left[ c_{j+k+2}-\Lambda c_{j+k+1}%
\right] \xi _{k}\overline{\xi _{j}}\geq 0.%
\end{array}
\label{cond5}
\end{equation}%
Therefore the positive definiteness of the matrix
\begin{equation*}
\widetilde{\Gamma }_{n-2}^{(2)}-\Lambda \widetilde{\Gamma }%
_{n-2}^{(1)}=\left( c_{j+k+2}-\Lambda c_{j+k+1}\right) _{j,k=0}^{n-2}
\end{equation*}%
is an additional necessary condition for the solvability of the Hamburger
moment problem with the gap for a given moments $c_{0},...,c_{2n}$.

To find sufficient conditions of solvability and find a description of
canonical solutions of the gap problem one can as above look at this
problem from the point of view of the extension theory. In other words,
taking the set of moments $c_{0},...,c_{2n}$ one can consider the Hilbert
space $\mathrm{L}_{n}$ of polynomials $P(t)=\xi _{0}+\xi _{1}\ t+\xi _{2}\
t^{2}+\ldots +\xi _{n}\ t^{n}$ with the norm
\begin{equation*}
\Vert P\Vert =\sqrt{\underset{j,k=0}{\overset{n}{\sum }}c_{j+k+2}\xi _{k}%
\overline{\xi _{j}}}
\end{equation*}%
and the symmetric operator $\widetilde{A}_{0}$ in $\mathrm{L}_{n}$ defined
as the multiplication by $t$ operator on the subspace $\mathrm{L}%
_{n-1}\subset \mathrm{L}_{n}$ of polynomials of degree not exceeding $n-1$.
Remind that all solutions of the corresponding Hamburger problem are
generated by the self-adjoint extensions $\widetilde{A}$ of $\widetilde{A}%
_{0}$. Any self-adjoint extension $\widetilde{A}$ of $\widetilde{A}_{0}$ is
a special extension of $\widetilde{A}_{0}$ onto the defect subspace $%
\mathcal{N}_{0}=\mathrm{L}_{n}\ominus \mathrm{L}_{n-1}$. In our case $\dim
\mathcal{N}_{0}=1$ and $\mathcal{N}_{0}$ consists of polynomials, which are
collinear to the orthogonal polynomial $p_{n}(t)$, which is associated with
the set of moments $c_{0},...,c_{2n}$. It is easy to verify that $\widetilde{%
A}$ is uniquely defined by the formula
\begin{equation}\begin{array}{c}
(\widetilde{A}p_{k})(t)=\alpha _{k}p_{ k}(t)+\beta _{k-1}p_{k-1}(t)+\beta _{k}p_{k+1}(t),  \\
 \alpha _{k}=\left( \widetilde{A}_{0}p_{k},p_{k}\right)_{\mathrm{L}_{n}}, \; \beta
_{k}=\frac{\sqrt{\Delta _{k}\Delta _{k-2}}}{\Delta _{k-1}},\,\,\Delta
_{k}=\det (c_{j+k})_{j,k=0}^{k}, \;  k=0,...,n-1,\\
(\widetilde{A}p_{n})(t)=\alpha _{\tilde{A}}p_{n}(t)+\beta _{n-1}p_{n-1}(t), \label{param}
\end{array}
\end{equation}%
where $\alpha _{\tilde{A}}$ is a real parameter defining the the extension $%
\widetilde{A}$.

With no limitations on values of  real $\alpha _{\tilde{A}}$ the expressions (\ref{param}) define the all self-adjoint extensions of $\widetilde{A}_{0}$ in $\mathrm{L}_{n}$ and generate in this way the all canonical solutions $\widetilde{\sigma}(t)$ of the  truncated Hamburger moment problem. Remind that they  are describrd by the Nevanlinna formula
\begin{equation}\label{nevanlinna}
\overset{\infty }{\underset{-\infty }{\int }}\frac{d\widetilde{\sigma }%
_{\alpha _{\tilde{A}}}\left( t\right)}{t-z}=-\frac{q_{n}\left( z\right) (\alpha _{\tilde{A}}
+z)-q_{n-1}\left( z\right) }{p_{n}\left( z\right) \left( \alpha _{\tilde{A}} +z\right)
-p_{n-1}\left( z\right)},\,\,\mathrm{\mathrm{Im}}z\geq 0,
\end{equation}
where $q_{k}(z)$ are corresponding conjugate polynomials,
\[
q_{k}(z)=\int\limits_{-\infty}^{\infty}\frac{p_{k}(t)-p_{k}(z)}{t-z}d\widetilde{\sigma }%
_{\alpha _{\tilde{A}}}\left( t\right).
\]
It follows from the Nevanlinna formula (\ref{nevanlinna}) that {\it those and only those $\alpha _{\tilde{A}}$ give the sought canonical solutions with $\Lambda $-gap for which the polynomials}
\begin{equation}\label{quasipol1}
M_{\tilde{A}}(t)=(t-\alpha_{\tilde{A}})p_{n}(t)+\beta_{n-1}p_{n-1}(t)
\end{equation}
{\it have no zeros in} $(0,\Lambda)$.

Note that in some cases the last condition may not be satisfied for any real $\alpha _{\tilde{A}}$, that is the Hamburger moment problem with given gap may be not solvable while the corresponding problem without the gap demand may have infinitely many solutions. Indeed, remember that for any real $\alpha _{\tilde{A}}\neq 0$ the zeros of polynomial $M_{\tilde{A}}(t)$ are real and simple and between any two zeros of $p_{n-1}(t)$ there is at least one zero of $M_{\tilde{A}}(t)$. Hence, if $p_{n-1}(t)$ has two or more zeros in $(0,\Lambda)$ the corresponding Hamburger problem with this gap has no solutions.

 To get the solvability condition of the truncated Hamburger proble with gap observe that any self-adjoint extension $\widetilde{A}$ of $\widetilde{A}_{0}
$ in $\mathrm{L}_{n}$ generates a self-adjoint extension $Q_{\widetilde{A}}$
of the symmetric operator $Q_{0}$, which is defined on the subspace $\mathrm{%
L}_{n-2}\subset \mathrm{L}_{n}$ and acts as the operator of multiplication
by the polynomial $t(t-\Lambda )$, $Q_{\widetilde{A}}$ is simply $\widetilde{%
A}(\widetilde{A}-\Lambda \cdot I)$, where $I$ is the unity operator in $%
\mathrm{L}_{n}$. At the same time $\widetilde{A}$ generates a gap extension
if and only if $\widetilde{A}$ has no eigenvalues on the segment $[0,\Lambda
]$, that is if and only if $Q_{\widetilde{A}}$ is a positive operator.

Let $\mathcal{N}_{1}$ denote the subspace $\mathrm{L}_{n}\ominus \mathrm{L}%
_{n-2}$. With respect to the representation of $\mathrm{L}_{n}=\mathrm{L}%
_{n-2}\oplus \mathcal{N}_{1}$, write an extension $Q_{\widetilde{A}}$ in the
block form
\begin{equation}
Q_{\widetilde{A}}=\left(
\begin{array}{cc}
Q_{00} & K \\
K^{\ast } & W_{\widetilde{A}}%
\end{array}%
\right) .  \label{block1}
\end{equation}%
Remind that the $(n-2)\times (n-2)$ block $Q_{00}$ of $Q_{\widetilde{A}}$
does not depend on the choice of the extension $\widetilde{A}$ and by (\ref%
{cond5}) it is a positive operator (positive definite matrix). Using the
Schur-Frobenius factorization
\begin{equation}
Q_{\widetilde{A}}=\left(
\begin{array}{cc}
I & 0 \\
K^{\ast }Q_{00}^{-1} & I%
\end{array}%
\right) \left(
\begin{array}{cc}
Q_{00} & 0 \\
0 & W_{\widetilde{A}}-K^{\ast }Q_{00}^{-1}K%
\end{array}%
\right) \left(
\begin{array}{cc}
I & Q_{00}^{-1}K \\
0 & I%
\end{array}%
\right) ,  \label{Schur3}
\end{equation}%
we see that under our assumptions the operator $Q_{\widetilde{A}}$ is
positive if and only if the operator ($2\times 2$ matrix) $W_{\widetilde{A}%
}-K^{\ast }Q_{00}^{-1}K$ is positive (positive definite).

To represent the positivity condition for the $2\times 2$ matrix $W_{\widetilde{A}%
}-K^{\ast }Q_{00}^{-1}K$  in an explicit form let us assume that operator $Q_{\widetilde{A}}$ is invertible and  write  $Q_{\widetilde{A}}^{-1}$ with respect to the splitting $\mathrm{L}_{n}=\mathrm{L}_{n-2}\oplus \mathcal{N}_{1}$ in the block form
\begin{equation}
Q_{\widetilde{A}}^{-1}=\left(
\begin{array}{cc}
Y_{00} & X \\
X^{\ast } & Z_{\widetilde{A}}%
\end{array}%
\right) .  \label{block2}
\end{equation}%
 Note that
\begin{equation}\label{unexp}
 Z_{\widetilde{A}}^{-1}=W_{\widetilde{A}}-K^{\ast }Q_{00}^{-1}K.
\end{equation}
Indeed, for any invertible block-matrix
\[
\mathbf{L}=\left(
\begin{array}{cc}
A & C \\
C^{\ast } & B%
\end{array}%
\right)
\]
with invertible diagonal block $B$ a direct calculatiion shows that
\begin{equation}\label{standard}
\left(
\begin{array}{cc}
0 & 0 \\
0 & B^{-1}%
\end{array}%
\right) =\mathbf{L}^{-1}- \mathbf{L}^{-1}\left(
\begin{array}{cc}
G_{A}^{-1} & 0 \\
0 & 0%
\end{array}%
\right)\mathbf{L}^{-1},
\end{equation}
where $G_{A}$ is the left upper block of  $\mathbf{L}^{-1}$. Replacing in (\ref{standard}) $\mathbf{L}^{-1}$ by $\mathbf{L}$ and applying the obtained relation to $Q_{\tilde{A}}$ yields (\ref{unexp}). Hence  $W_{\widetilde,e{A}}-K^{\ast }Q_{00}^{-1}K$ is positive definite if and only if the corresponding inverse matrix $Z_{\widetilde{ A}}$ is positive definite.

To obtain a condition that guarantees that $Z_{\widetilde{ A}}>0$ in terms of given moments $c_{0},...,c_{2n}$ let us write down the matrix $\hat{Z}_{\widetilde{ A}}$  of $Z_{\widetilde{ A}}$ for the basis of $e_{1}, e_{2}$ in $ \mathcal{N}_{1}$ of the orthogonal polynomials $p_{n-1}(t), p_{n}(t)$, respectively. To this end remember that for any $z$ which is not an eigenvalue of $\tilde{A}$ the resolvent $ \left(\tilde{A}-zI\right)^{-1}$ acts on any polynomial $g\in \mathrm{L}_{n}$  by formula
\begin{equation} \label{resolv}
\left[ \left(\tilde{A}-zI\right)^{-1}g\right](t)=\frac{g(t)M_{\tilde{A}}(z)-g(z)M_{\tilde{A}}(t)}{M_{\tilde{A}}(z)}\frac{1}{t-z}
\end{equation}
with introduced as in (\ref{quasipol1})  polynomial $M_{\tilde{A}}(t)$.
Applying, in particular,  (\ref{resolv}) to $p_{n-1}(t)$ and $p_{n}(t)$ with account of  the Christoffel-Darboux identity for the polynomials $\{ p_{k}(t)\}_{0}^{n}$ we  obtain:
\begin{equation}\label{resolv1a}
\left[ \left(\tilde{A}-zI\right)^{-1}p_{n-1}\right](t)=\frac{1}{M_{\tilde{A}}(z)}\left[ \frac{z-\alpha _{\tilde{A}}}{\beta_{n-1}}\sum\limits_{k=0}^{n-1}p_{k}(z)p_{k}(t) \, +p_{n-1}(z)p_{n}(t)\right],
\end{equation}
\begin{equation}\label{resolv1b}
\left[ \left(\tilde{A}-zI\right)^{-1}p_{n}\right](t)=\frac{1}{M_{\tilde{A}}(z)}\sum\limits_{k=0}^{n}p_{k}(z)p_{k}(t).
\end{equation}
As follows, the right lower $2\times 2$-block $R_{\tilde{A}}(z)$  of the resolvent $ \left(\tilde{A}-zI\right)^{-1}$ for the basis $\{p_{k}\}_{0}^{n}$ has form
\begin{equation}\label{resolv3}
R_{\tilde{A}}(z)=\frac{1}{M_{\tilde{A}}(z)}\left( \begin{array}{cc} \frac{z-\alpha _{\tilde{A}}}{\beta_{n-1}}p_{n-1}(z),  & p_{n-1}(z) \\ p_{n-1}(z), &  p_{n}(z) \end{array} \right).
\end{equation}
Since
\[
Q_{\widetilde{A}}=\frac{1}{\Lambda}\left[(\widetilde{A}-\Lambda \cdot I)^{-1} -\widetilde{A}^{-1}\right],
\]
then by  (\ref{resolv3} )
\begin{equation}\label{resolv4}\begin{array}{c}
Z_{\widetilde{ A}}=\frac{1}{M_{\tilde{A}}(\Lambda)M_{\tilde{A}}(0)} \\  \times \left(\begin{array}{cc} \frac{\Lambda - \alpha _{\tilde{A}}}{\beta_{n-1}} p_{n-1}(\Lambda)M_{\tilde{A}}(0)+\frac{\alpha _{\tilde{A}}}{\beta_{n-1}}p_{n-1}(0)M_{\tilde{A}}(\Lambda), &  p_{n-1}(\Lambda)M_{\tilde{A}}(0)- p_{n-1}(0)M_{\tilde{A}}(\Lambda) \\  p_{n-1}(\Lambda)M_{\tilde{A}}(0)- p_{n-1}(0)M_{\tilde{A}}(\Lambda), &  p_{n}(\Lambda)M_{\tilde{A}}(0)- p_{n}(0)M_{\tilde{A}}(\Lambda) \end{array} \right).
\end {array}
\end{equation}
Setting $$ h_{s}(\lambda , \mu) = \sum\limits_{k=0}^{s}p_{k}(\lambda)p_{k}(\mu)$$ and applying the Christoffel-Darboux identity we deduce from the representation (\ref{resolv4}) that $Z_{\widetilde{ A}}$ is positive definite if and only if the following inequalities hold:
\begin{equation}\label{firsta}
\frac{h_{n}(\Lambda ,0)}{M_{\tilde{A}}(\Lambda)M_{\tilde{A}}(0)}>0,
\end{equation}
\begin{equation}\label{secondb}
\begin{array}{c}
W(\alpha _{\tilde{A}})=\frac{h_{n-1}(\Lambda ,0)}{\beta_{n-1}^{2} M_{\tilde{A}}(\Lambda)M_{\tilde{A}}(0)}\left\{ - \left[h_{n}(\Lambda ,0)+h_{n-1}(\Lambda ,0) \right]\alpha _{\tilde{A}}^{2}\right. \\ +\left[\Lambda  h_{n}(\Lambda ,0)h_{n-1}(\Lambda ,0)+2\beta_{n-1}p_{n}(\Lambda)p_{n-1}(0)h_{n-1}(\Lambda ,0)\right]\alpha _{\tilde{A}} \\ \left. +\beta_{n-1}p_{n-1}(0)\left[ \beta_{n-1}p_{n-1}(\Lambda)-\Lambda p_{n}(\Lambda)\right]\right\}>0.
\end{array}
\end{equation}

Summarizing the assertions obtained above yields the fillowing
\begin{thm}\label{gapsolv}
For the given set of moments $c_{0},...,c_{2n}$ the Hamburger moment problem
with the gap $[0,\Lambda ]$ is solvable and has infinitely many solutions if
and only if

\begin{itemize}
  \item {the Hankel matrices $\widetilde{\Gamma }_{n}=\left(
c_{j+k}\right) _{j,k=0}^{n}$ and
\begin{equation*}
\widetilde{\Gamma }_{n-2}^{(2)}-\Lambda \widetilde{\Gamma }%
_{n-2}^{(1)}=\left( c_{j+k+2}-\Lambda c_{j+k+1}\right) _{j,k=0}^{n-2}
\end{equation*}
are positive definite;}
  \item {the inequality (\ref{firsta}) holds;}
  \item {the roots of quadratic trinomial $W(\alpha _{\tilde{A }})$ are real and different.}
\end{itemize}

Under the above conditions the set of canonical solutions $d\widetilde{%
\sigma }_{\tau }(t)$ of the Hamburger problem with the gap $[0,\Lambda ]$ is
described by the Nevanlinna formula (\ref{nevanlinna}), where $\alpha _{\tilde{A }}$ runs the segment of real axis where $W(\alpha)\geq 0$.
\end{thm}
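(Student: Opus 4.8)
The plan is to assemble the ingredients developed above into a single chain of equivalences, reading off the three listed conditions as the successive requirements for a positive-definite Schur complement. First I would recall the correspondence established earlier: every canonical solution of the truncated Hamburger problem for $c_{0},\dots,c_{2n}$ is generated by a self-adjoint extension $\widetilde{A}$ of $\widetilde{A}_{0}$, parametrised by the real number $\alpha_{\tilde{A}}$ through (\ref{param}) and represented by the Nevanlinna formula (\ref{nevanlinna}); moreover such a solution has the gap $[0,\Lambda]$ precisely when $\widetilde{A}$ has no eigenvalue in $[0,\Lambda]$, i.e. when $Q_{\widetilde{A}}=\widetilde{A}(\widetilde{A}-\Lambda I)$ is positive definite. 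The existence of a whole family (rather than at most one) of such extensions already forces the defect space $\mathcal{N}_{0}$ to be nondegenerate, which is exactly the positive definiteness of $\widetilde{\Gamma}_{n}$; this is the first half of the first listed condition and is simultaneously the classical criterion for the gap-free Hamburger problem to be indeterminate with infinitely many solutions.

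Next I would analyse $Q_{\widetilde{A}}>0$ through the block decomposition $\mathrm{L}_{n}=\mathrm{L}_{n-2}\oplus\mathcal{N}_{1}$ of (\ref{block1}) and the Schur--Frobenius factorisation (\ref{Schur3}). Since the corner $Q_{00}$ is independent of the extension and, by (\ref{cond5}), has the quadratic form of $\widetilde{\Gamma}_{n-2}^{(2)}-\Lambda\widetilde{\Gamma}_{n-2}^{(1)}$, its invertibility --- which is needed to form the Schur complement --- is exactly the positive definiteness of that matrix, the second half of the first condition. Granted both, the factorisation reduces $Q_{\widetilde{A}}>0$ to positivity of the Schur complement $W_{\widetilde{A}}-K^{\ast}Q_{00}^{-1}K$, which by the matrix identity (\ref{unexp}) is equivalent to positive definiteness of its inverse $Z_{\widetilde{A}}$.

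The computational heart --- and the step I expect to be the main obstacle --- is to make $Z_{\widetilde{A}}>0$ explicit in the given moments. Here I would substitute the resolvent action (\ref{resolv}) on the basis $p_{n-1},p_{n}$ of $\mathcal{N}_{1}$, use the Christoffel--Darboux identity to collapse the sums into the kernels $h_{s}(\Lambda,0)$, and arrive at the closed form (\ref{resolv4}); Sylvester's criterion applied to this $2\times 2$ matrix then produces the two scalar inequalities (\ref{firsta}) and (\ref{secondb}). The delicate point is sign bookkeeping: $M_{\tilde{A}}(\Lambda)$ and $M_{\tilde{A}}(0)$ are themselves affine in $\alpha_{\tilde{A}}$ and may change sign, so (\ref{firsta}) must be read as fixing the common sign of $h_{n}(\Lambda,0)$ and $M_{\tilde{A}}(\Lambda)M_{\tilde{A}}(0)$, and only within that regime does the determinantal condition $W(\alpha_{\tilde{A}})>0$ reduce cleanly to positivity of the quadratic trinomial displayed in the braces of (\ref{secondb}).

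Finally, to pass from mere solvability to solvability with infinitely many solutions, I would observe that the admissible parameters form the set of $\alpha_{\tilde{A}}$ satisfying (\ref{firsta}) together with $W(\alpha_{\tilde{A}})\geq 0$. By the sign analysis of the previous step this set is the bounded segment lying between the two roots of the trinomial, so it is nonempty and nondegenerate --- hence uncountable --- exactly when those roots are real and distinct, which is the third listed condition; a coincident double root would leave only the single boundary value, recovering at most a degenerate or unique solution. Running $\alpha_{\tilde{A}}$ over that segment in the Nevanlinna formula (\ref{nevanlinna}) then yields precisely the announced family of canonical solutions with $\Lambda$-gap, which completes both the equivalence and the description.
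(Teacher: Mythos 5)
Your proposal follows essentially the same route as the paper: the reduction of the gap condition to positivity of $Q_{\widetilde{A}}=\widetilde{A}(\widetilde{A}-\Lambda I)$, the Schur--Frobenius factorization over $\mathrm{L}_{n}=\mathrm{L}_{n-2}\oplus\mathcal{N}_{1}$, the passage to the inverse block $Z_{\widetilde{A}}$ via (\ref{unexp}), the explicit evaluation through the resolvent formula and the Christoffel--Darboux identity leading to (\ref{firsta}) and (\ref{secondb}), and finally the identification of the admissible parameters $\alpha_{\tilde{A}}$ with the segment where $W\geq 0$, nondegenerate exactly when the roots are real and distinct. Your additional remarks on the sign bookkeeping for $M_{\tilde{A}}(\Lambda)M_{\tilde{A}}(0)$ and on why distinct roots yield infinitely many solutions merely make explicit the summarizing step the paper leaves implicit, so the argument is correct and matches the paper's own.
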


\end{document}